\documentclass{amsart}
\usepackage{todonotes}
\newtheorem{lemma}{Lemma}
\newtheorem{theorem}{Theorem}
\newtheorem{corollary}{Corollary}

\usepackage[linktocpage]{hyperref}

\title{Stable Solutions to Generalizations of Fibonacci Relations of Higher Order}
\author{Timmy Ma, Patrick Vernon}

\begin{document}

\begin{abstract}\label{sect:abstract}
   We reformulate and discuss the conditions for the initial values in which a generalized Fibonacci sequence is eventually strictly monotonic. We then extend this exploration to the generalized \(k\)-bonacci sequence. We also give a lower bound on when an eventually strictly monotone generalized \(k\)-bonacci sequence becomes monotone.
\end{abstract}
\maketitle

\section{Introduction}\label{sect:intro}
The classical \textit{Fibonacci sequence, $\{f_n\}$,} is defined by: 
\begin{equation}\label{eqn:Fibonacci}
    \begin{cases}
        f_0=0,\\
        f_1=1,\\
        f_{n+1} = f_n + f_{n-1}.
    \end{cases}
\end{equation}  The first few terms of this sequence are as follows:
\[0, 1, 1, 2, 3, 5, 8, 13, \dots\]

The Binet formula for this sequence can be used to calculate the \(n\)th term \cite{vorobiev2002fibonacci}: 
\begin{equation}\label{eqn:binet_fib}
    f_n = \frac{1}{\sqrt{5}}\varphi^n - \frac{1}{\sqrt{5}}\psi^n,
\end{equation} 
where \(\varphi,\psi\) are the solutions to the equation \(x^2 - x - 1 = 0\); specifically, \[\varphi = \frac{1 + \sqrt{5}}{2} \approx 1.618\] and \[\psi = -\frac{1}{\varphi} = \frac{1 - \sqrt{5}}{2} \approx -0.618.\] 
Note that \(\frac{1}{\sqrt{5}}\psi^n\) is a vanishing term of the Binet formula since \(|\psi|<1\), meaning \(\left|f_n - \frac{1}{\sqrt{5}}\varphi^n\right|\) approaches \(0\) as \(n\) approaches infinity. Additionally, this means that the ratio \(f_{n+1} / f_n\) approaches \(\varphi\) as \(n\) approaches infinity. This is referred to as the golden ratio. 

Similarly to the Fibonacci sequence, we define the \textit{Tribonacci \(\{T_n\}\) sequence }as the solution to: 
\begin{equation}\label{eqn:trib}
    \begin{cases}
        T_0=T_1=0,\\
        T_2=1,\\
        T_{n + 1} = T_n + T_{n - 1} + T_{n - 2} .
    \end{cases}
\end{equation} 
The Binet formula for this sequence is:
\begin{equation}\label{eqn:binet-trib}
T_n = \frac{\rho^n}{(\rho - \sigma)(\rho - \bar{\sigma})} + \frac{\sigma^n}{(\sigma - \rho)(\sigma - \bar{\sigma})} + \frac{\bar{\sigma}^n}{(\bar{\sigma} - \rho)(\bar{\sigma} - \sigma)},
\end{equation}
where \(\rho, \sigma,\) and \(\bar{\sigma}\) are the solutions to the complex equation \(z^3 - z^2 - z - 1 = 0\) \cite{spickerman1982binet}. The approximate values are \[\rho \approx 1.8393, \sigma \approx -0.4196 + 0.6063i, \text{ and } \bar{\sigma} \approx -0.4196 - 0.6063i.\] Again, only one of these solutions has magnitude at least \(1\), so the Binet formula for \(T_n\) has only one non-vanishing term, the term involving \(\rho^n\).

We can also consider sequences that are solutions to linear recurrence relations of order \(k\) for some \(k \geq 2\), such that all coefficients are \(1\), \cite{miles1960generalized}. That is, for any integer \(k\geq 2\), the \textit{\(k\)-bonacci sequence} \(\{q_n\}\) is defined as the solution to:
\begin{equation}\label{eqn:k-bon}
    \begin{cases}
        q_1=q_2=\dots=q_{k-2}=0,\\
        q_{k-1}=1,\\
        q_{n + 1} = q_n+ q_{n-1}+\dots+q_{n-k+1}.
    \end{cases}
\end{equation} Note that in the cases when \(k = 2\) and when \(k = 3\) we have the classical Fibonacci (\ref{eqn:Fibonacci}) and Tribonacci (\ref{eqn:trib}) sequences described above.

A \textit{generalized \(k\)-bonacci sequence}, $\{F_n\}$, is defined as a solution to the relation:
\begin{equation}\label{eqn:gen_k-bon}
F_{n+1} = F_{n} + \cdots + F_{n - k+1}.
\end{equation} This formulation is discussed in \cite{dresden2014simplified, ferguson1966expression, flores1967direct, gabai1970generalized,lee2001binet,miles1960generalized, spickerman1984binet, verma2025mathematics, wolfram1998solving}. When $k=2$, we call this a generalized Fibonacci sequence. When $k=3$, we call this a generalized Tribonacci sequence.

A sequence \(\{a_n\}\) is \textit{eventually strictly monotone increasing} if there exists \(K\) such that for all \(n > K\) we have \(a_{n+1} > a_n\). We can define \textit{strictly monotone decreasing} similarly. Certainly, we can see that the classical Fibonacci sequence is increasing.  In \cite{ozban2025eventually}, the authors studied restrictions on the initial conditions in the case where $k=2$, which resulted in an eventually strictly monotone increasing (decreasing) sequence. We can also explore the conditions on the initial values which allow (\ref{eqn:gen_k-bon}) to be eventually strictly monotone. Note that since each term in (\ref{eqn:gen_k-bon}) is the sum of its $k$ previous terms, the property of being eventually strictly monotone is equivalent to the sequence having $k$ consecutive terms of the same sign. We will also define a generalized Fibonacci sequence to be \textit{stable} if it is not eventually strictly monotone.

In section \ref{sect:background}, we provide a background description to set up our discussions and a restatement of the results from \cite{ozban2025eventually}. 

In section \ref{sect:gen_k-bon}, we discuss the $k$-bonacci sequence, the generalized $k$-bonacci sequence, and their Binet formula. We then discuss in section \ref{sect:suff-cond} the conditions for the generalized $k$-bonacci sequence to be either stable or eventually strictly monotone.

\section{Background}\label{sect:background}


We make a quick observation about the Binet formula of the generalized Fibonacci sequence. The proof is left to the reader.

\begin{lemma}
\label{general-relate-to-conventional}
    Let \(\{F_n\}\) be a sequence such that \(F_{n + 1} = F_n + F_{n - 1}\) for all \(n\). Then \[F_n = f_{n - 1} F_0 + f_n F_1\] for all \(n\), where $f_n$ are the terms of the classical Fibonacci sequence, (\ref{eqn:Fibonacci}).
\end{lemma}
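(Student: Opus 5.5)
The plan is to argue by induction on \(n\), exploiting the fact that both sides of the claimed identity satisfy the same second-order recurrence. The first step is to fix the meaning of \(f_{-1}\), which is needed for the case \(n=0\). Extend the classical Fibonacci sequence (\ref{eqn:Fibonacci}) backwards by the same relation \(f_{n+1}=f_n+f_{n-1}\). This forces \(f_{-1}=f_1-f_0=1\), and more generally \(f_{-m}=(-1)^{m+1}f_m\). The statement says ``for all \(n\)'' and the recurrence for \(\{F_n\}\) is assumed for all \(n\), so I will treat both sequences as indexed by all integers.

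Next, define \(G_n = f_{n-1}F_0 + f_nF_1\). Since \(G_n\) is a fixed linear combination of two shifts of \(\{f_n\}\), it inherits the recurrence:
\[
G_{n+1} = f_nF_0 + f_{n+1}F_1 = (f_{n-1}+f_{n-2})F_0 + (f_n+f_{n-1})F_1 = G_n + G_{n-1}.
\]
Then I check the two initial values:
\[
G_0 = f_{-1}F_0 + f_0F_1 = F_0, \qquad G_1 = f_0F_0 + f_1F_1 = F_1.
\]

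The final step is uniqueness. A sequence satisfying \(x_{n+1}=x_n+x_{n-1}\) is determined by any two consecutive values. Going forward this is immediate from the recurrence. Going backward, rewrite it as \(x_{n-1}=x_{n+1}-x_n\), which determines the earlier terms. Hence \(D_n=F_n-G_n\), which satisfies the recurrence and has \(D_0=D_1=0\), vanishes for all \(n\geq 0\) by strong induction upward and for all \(n<0\) by induction downward.

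The only step needing care is the convention for negative indices, specifically the value \(f_{-1}=1\). Once that convention is fixed, the rest is a routine two-term induction.
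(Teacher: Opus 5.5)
Your proof is correct. The paper leaves this lemma to the reader, and your argument is the standard induction that fills the gap: you show \(G_n = f_{n-1}F_0 + f_nF_1\) satisfies the same recurrence, check that it matches \(F_0, F_1\) at \(n=0,1\), and invoke uniqueness forward and backward. Your convention \(f_{-1}=1\), forced by running the recurrence backward, also supplies a detail the paper leaves implicit, since the \(n=0\) case of the statement uses \(f_{-1}\), which the original definition of the Fibonacci sequence does not provide.
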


A direct result of Lemma \ref{general-relate-to-conventional} is the following:
\begin{eqnarray*}
    F_n &=& f_{n - 1} F_0 + f_n F_1\\
    &=& \frac{1}{\sqrt{5}} \Big(\left(\varphi^{n - 1} + \psi^{n - 1}\right) F_0 + \left(\varphi^n + \psi^n\right) F_1 \Big).
\end{eqnarray*}

Thus, we have a formulation of the \textbf{Binet Formula for the generalized Fibonacci sequence},
\begin{eqnarray}\label{Binet-general-fibonacci}
    F_n &=& \frac{1}{\sqrt{5}}\Big(\varphi^{n - 1}\left(F_0 + \varphi F_1\right) + \psi^{n - 1}\left(F_0 + \psi F_1\right)\Big).
\end{eqnarray}
Other such formulations can be found in \cite{horadam1961generalized, mahajan2014binet, miles1960generalized, verma2025mathematics}.

\subsection{Eventually strictly monotone}\label{sect:strict_monotone}

 We now provide a restatement of the previous authors' results \cite{ozban2025eventually}.

\begin{theorem}\label{thm:fib_mono}
    Let \(\{F_n\}\) be a sequence such that \(F_{n + 1} = F_n + F_{n - 1}\) for all \(n\). If \(F_1 = (-\frac{1}{\varphi})F_0,\) then \(\{F_n\}\) is stable. If \(F_1 > (-\frac{1}{\varphi})F_0 ,\) then \(\{F_n\}\) is eventually strictly monotone increasing, and if \(F_1 < (-\frac{1}{\varphi})F_0 ,\) then \(\{F_n\}\) is eventually strictly decreasing.
\end{theorem}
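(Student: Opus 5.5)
I will argue directly from the Binet formula (\ref{Binet-general-fibonacci}). Set
\[A = F_0 + \varphi F_1, \qquad B = F_0 + \psi F_1,\]
so that $\sqrt{5}\,F_n = A\varphi^{n-1} + B\psi^{n-1}$. Since $\psi = -1/\varphi$, the condition $F_1 = (-\tfrac{1}{\varphi})F_0$ is equivalent to $F_0 + \varphi F_1 = 0$, i.e.\ $A=0$. Likewise $F_1 > (-\tfrac1\varphi)F_0$ is equivalent to $A>0$ (multiply by $\varphi>0$), and $F_1 < (-\tfrac1\varphi)F_0$ is equivalent to $A<0$. The whole theorem then reduces to reading off the sign behaviour of $F_n$ from the sign of $A$.

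\emph{Case $A=0$.} Here $F_n = \frac{B}{\sqrt5}\psi^{n-1}$. If $B=0$ as well, then $F_0$ and $F_1$ satisfy two independent linear conditions ($\varphi\neq\psi$), so $F_0=F_1=0$ and the sequence is identically zero. That sequence is not strictly monotone, hence stable. If $B\neq 0$, then since $\psi<0$ the terms $F_n$ strictly alternate in sign. Thus no two consecutive terms have the same sign, and by the remark in the introduction (eventual strict monotonicity is equivalent to having $k=2$ consecutive terms of the same sign) the sequence is stable. One can also see it directly: $F_{n+1}-F_n = F_{n-1}$ alternates in sign, so the sequence is not eventually monotone.

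\emph{Case $A\neq 0$.} I compute the differences
\[\sqrt5\,(F_{n+1}-F_n) = \sqrt5\,F_{n-1} = A\varphi^{n-2} + B\psi^{n-2}.\]
Since $|\psi|<1<\varphi$, we have $|B\psi^{n-2}|/(|A|\varphi^{n-2}) = (|B|/|A|)\,\varphi^{-2(n-2)} \to 0$. Hence for all sufficiently large $n$ the difference $F_{n+1}-F_n$ has the sign of $A$. Concretely, I would pick $K$ with $\varphi^{2(K-2)} > |B|/|A|$. The sequence is then eventually strictly increasing when $A>0$ and eventually strictly decreasing when $A<0$, which are exactly the two remaining assertions of the theorem.

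The only mildly delicate point is the equivalence of the hypotheses with the sign of $A$ and the degenerate subcase $A=B=0$; everything else is the standard domination of the $\varphi^n$ term. I expect no real obstacle beyond checking the inequality direction when multiplying through by $\varphi$.
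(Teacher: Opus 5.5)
Your proof is correct and takes the same route as the paper: both read off the behaviour from the sign of the dominant Binet coefficient $F_0+\varphi F_1$. You are more explicit than the paper's one-line limit argument, turning $F_n\to\pm\infty$ into monotonicity via $F_{n+1}-F_n=F_{n-1}$ with a concrete threshold, and handling stability through the sign alternation of $\psi^{n-1}$ together with the zero sequence. One small remark: the paper's generalized Binet formula that you quoted has a sign slip (since $f_n=(\varphi^n-\psi^n)/\sqrt{5}$, the $\psi$-term should be $-\psi^{n-1}(F_0+\psi F_1)$). Your argument only uses that $B$ is a constant vanishing exactly when $F_0+\psi F_1=0$, so it is unaffected.
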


\begin{proof}
    This follows immediately from the fact that there is only one possible non-vanishing term in the Binet formula given by (\ref{Binet-general-fibonacci}), namely, \(\frac{1}{\sqrt{5}}\varphi^{n - 1}(F_0 + \varphi F_1)\). As \(n\) approaches infinity, then \(F_n\) will approach infinity if \(F_0 + \varphi F_1 > 0\), will approach negative infinity if \(F_0 + \varphi F_1 < 0\), and will approach zero if \(F_0 + \varphi F_1 = 0\).
\end{proof}

\begin{corollary}
    Any stable generalized Fibonacci sequence \(\{F_n\}\) approaches zero as \(n\) approaches infinity, and the ratio \(F_{n+1} / F_n\) for any stable generalized Fibonacci sequence is equal to \(\psi\) for any \(n\).
\end{corollary}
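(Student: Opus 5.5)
The plan is to use the Binet formula (\ref{Binet-general-fibonacci}) together with the characterization of stability in Theorem \ref{thm:fib_mono}. By that theorem, a generalized Fibonacci sequence \(\{F_n\}\) is stable exactly when \(F_1 = -\frac{1}{\varphi}F_0 = \psi F_0\), which is the same as \(F_0 + \varphi F_1 = 0\). So the first step is simply to record that stability forces the coefficient of the \(\varphi^{n-1}\) term in (\ref{Binet-general-fibonacci}) to vanish.

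Substituting this into (\ref{Binet-general-fibonacci}) leaves
\[
F_n = \frac{1}{\sqrt{5}}\,\psi^{n-1}\left(F_0 + \psi F_1\right)
\]
for every \(n\). Since \(|\psi| < 1\), we have \(\psi^{n-1} \to 0\), and hence \(F_n \to 0\). This proves the first claim.

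For the ratio claim, I would avoid dividing the closed form and argue directly from the recurrence. Suppose \(F_n = \psi^n F_0\) holds for two consecutive indices. Since \(\psi^2 = \psi + 1\), we get
\[
F_{n+1} = F_n + F_{n-1} = \psi^{n-1}(\psi + 1)F_0 = \psi^{n+1}F_0 .
\]
Thus the relation propagates forward. Running the recurrence backwards as \(F_{n-1} = F_{n+1} - F_n\) shows it also propagates backward. The base cases are \(F_0\) and \(F_1 = \psi F_0\), so \(F_n = \psi^n F_0\) for all \(n\), and therefore \(F_{n+1} = \psi F_n\).

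The one real obstacle is the ratio \(F_{n+1}/F_n\), which is undefined when \(F_n = 0\). Since \(\psi \neq 0\), \(F_n = \psi^n F_0\) vanishes only when \(F_0 = 0\), which makes the sequence identically zero. I would handle this explicitly: either exclude the trivial zero sequence, or state the conclusion in the form \(F_{n+1} = \psi F_n\), which holds in every case. Apart from that caveat, the argument is a direct computation.
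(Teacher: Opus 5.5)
Your argument is correct. It starts from the same point as the paper: stability means $F_0+\varphi F_1=0$, so the $\varphi^{n-1}$ term of (\ref{Binet-general-fibonacci}) drops out. For the ratio, however, you take a different route. The paper divides two instances of the reduced closed form, $\frac{\psi^{n}(F_0+\psi F_1)}{\psi^{n-1}(F_0+\psi F_1)}=\psi$, and leaves $F_n\to 0$ implicit. You instead prove $F_n=\psi^nF_0$ by forward and backward induction, using only the recurrence and $\psi^2=\psi+1$. Your version is more self-contained: it gives $F_n\to 0$ immediately, and it flags the zero sequence ($F_0=F_1=0$), where the paper's quotient is $0/0$. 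It also sidesteps a sign slip in the paper. Since $f_n=(\varphi^n-\psi^n)/\sqrt5$, the $\psi$-term in (\ref{Binet-general-fibonacci}) should carry a minus sign. So the display you obtain by substitution, $F_n=\frac{1}{\sqrt5}\psi^{n-1}(F_0+\psi F_1)$, actually equals $-\psi^nF_0$ when $F_1=\psi F_0$, because $F_0+\psi F_1=-\sqrt5\,\psi F_0$. It therefore fails at $n=0$ unless $F_0=0$. This is harmless for your limit step, which only uses $|\psi|<1$, and the paper's ratio survives because the constant cancels. Still, it is your induction that produces the correct exact formula $F_n=\psi^nF_0$.
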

\begin{proof}
    Since the generalized Fibonacci sequence is stable then $F_1=(-\frac{1}{\varphi})F_0$. Then we have that \[\frac{F_{n+1}}{F_n} = \frac{\psi^n(F_0+\psi F_1)}{\psi^{n-1}(F_0+\psi F_1)}=\psi\] for all $n$.
\end{proof}
\begin{corollary}
    For any eventually monotone generalized Fibonacci sequence \(\{F_n\}\), the ratio \(F_{n + 1} / F_n\) approaches \(\varphi\) as \(n\) approaches infinity.
\end{corollary}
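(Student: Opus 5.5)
We will work directly from the Binet formula (\ref{Binet-general-fibonacci}), writing
\[
F_n = \frac{1}{\sqrt{5}}\Big(A\varphi^{n-1} + B\psi^{n-1}\Big), \qquad A = F_0+\varphi F_1,\quad B = F_0+\psi F_1 .
\]
The first step is to determine which initial values give an eventually monotone sequence. By Theorem \ref{thm:fib_mono} and its definition of stability, a generalized Fibonacci sequence is eventually strictly monotone exactly when \(F_1 \neq -\frac{1}{\varphi}F_0\), that is, exactly when \(A \neq 0\). We should also rule out the trivial sequence \(F_0=F_1=0\). It gives \(A=0\), so it is stable, and in any case it is not strictly monotone. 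Consequently, under our hypothesis we have \(A\neq 0\) throughout.

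Next we check that the ratio \(F_{n+1}/F_n\) makes sense for large \(n\). Because \(A\neq 0\) and \(|\psi|<1<\varphi\), the quantity \(\sqrt5\,F_n/\varphi^{n-1} = A + B(\psi/\varphi)^{n-1}\) tends to \(A\neq 0\). So \(F_n \neq 0\) for all sufficiently large \(n\), and the ratio is defined from some point on. This also follows from eventual strict monotonicity, since the sequence tends to \(\pm\infty\).

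Finally we compute the ratio by dividing numerator and denominator by \(\varphi^{n-1}\):
\[
\frac{F_{n+1}}{F_n} = \frac{A\varphi^{n} + B\psi^{n}}{A\varphi^{n-1}+B\psi^{n-1}} = \frac{A\varphi + B\psi\,(\psi/\varphi)^{n-1}}{A + B\,(\psi/\varphi)^{n-1}}.
\]
Since \(|\psi/\varphi|<1\), both terms carrying \((\psi/\varphi)^{n-1}\) tend to \(0\). The limit is therefore \(A\varphi/A=\varphi\), and this step uses \(A\neq0\).

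The only real obstacle is the first step: translating ``eventually monotone'' into the nonvanishing of the coefficient \(A\), including the degenerate zero sequence. Once that is settled, the limit computation is routine.
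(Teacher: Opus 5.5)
Your proof is correct and follows the same route as the paper. Eventual monotonicity forces $F_0+\varphi F_1\neq 0$ via Theorem~\ref{thm:fib_mono}, and the $\psi^{n-1}$ term of the Binet formula is negligible against the $\varphi^{n-1}$ term, so the ratio tends to $\varphi$; your division by $\varphi^{n-1}$ and your check that $F_n\neq 0$ eventually make rigorous what the paper's one-line limit leaves implicit. One harmless remark: since $f_{n-1}=(\varphi^{n-1}-\psi^{n-1})/\sqrt{5}$, the correct coefficient of $\psi^{n-1}$ in (\ref{Binet-general-fibonacci}) is $-(F_0+\psi F_1)/\sqrt{5}$, not $+(F_0+\psi F_1)/\sqrt{5}$, but your argument only uses that $B$ is a constant, so nothing changes.
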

\begin{proof}
    Since the generalized Fibonacci sequence is eventually monotone then we have that $F_1\not=(-\frac{1}{\varphi})F_0$, and $\psi^{n-1}(F_0+\psi F_1)$ is the vanishing term of (\ref{Binet-general-fibonacci}), thus we have 
    \[\frac{F_{n+1}}{F_n} \rightarrow  \frac{\varphi^n(F_0+\varphi F_1)}{\varphi^{n-1}(F_0+\varphi F_1)}=\varphi\] as $n$ approaches infinity.
\end{proof}

\section{Generalized k-bonacci sequence}\label{sect:gen_k-bon}

We begin by stating the Binet formula for the \(k\)-bonacci sequence. 

\subsection{Binet formula of \texorpdfstring{$k$}{k}-bonacci sequence}\label{subsect:k-bon}
Many results are known about the Binet formula for the $k$-bonacci sequence, \cite{lee2001binet, miles1960generalized, miller1971generalized, spickerman1984binet}. In particular, we have the Binet formula for the \(k\)-bonacci sequence (\ref{eqn:k-bon}):
\begin{equation}\label{eqn:binet_k-bon}
    q_n = \sum_{j = 1}^k A_j \alpha_j^n
\end{equation} for each \(n\), and \[A_j=\frac{1}{\prod_{i\not=j}^k(\alpha_j-\alpha_i)}.\]
\begin{itemize}
    \item \(\frac{1}{\alpha_j}\) are the roots of \(g_k(x)=1-x-x^2-\dots-x^k\) for $j=1,\dots,k,$
    \item \(|\alpha_j|<1\) for all but one, without loss of generality, assume \(|\alpha_1|>1\).
    \item \(\alpha_j\not=\alpha_l\) if \(j\not=l\).
\end{itemize}
Similarly, from (\ref{general-relate-to-conventional}), we can express the generalized \(k\)-bonacci sequence as:
    \begin{equation}\label{eqn:Fn_qn}
        F_n = \sum_{m = 0}^{k - 1} \left(q_{n - m - 1} \sum_{j = m}^{k - 1} F_j\right).
    \end{equation}

\subsection{Binet formula of generalized \texorpdfstring{$k$}{k}-bonacci}\label{subsect:gen_k-bon}
Using (\ref{eqn:binet_k-bon}) and (\ref{eqn:Fn_qn}), and rearranging of the terms we have 
the \textbf{Binet formula of the generalized \(k\)-bonacci sequence}:
\begin{equation}\label{eqn:binet_gen-k}
    F_n= \sum_{j = 1}^k A_j \alpha_j^{n - k}\left( (\alpha_j ^{k - 1})F_0 + (\alpha_j^{k - 1} + \alpha_j^{k - 2}) F_1 + \cdots + (\alpha_j^{k - 1} + \cdots + 1)F_{k - 1}\right).
\end{equation}

The first term in the formula for \(q_n\) is the only non-vanishing term since $|\alpha_j|<1$ for each $j\not=1$. Thus \(q_n\) approaches \(A_1 \alpha_1^n\) as \(n\) approaches infinity. Since the sequence \(\{q_n\}\) contains only positive terms, when $n\geq 2$, then $A_1$ must be real and positive. Specifically, following the steps from \cite{miller1971generalized, spickerman1984binet}, \[A_1=\frac{\alpha_1^2-\alpha_1}{2\alpha_1^k-(k+1)}.\] 

\begin{theorem}\label{thm:k-bon_mono}
    Let \(\{F_n\}\) be a sequence such that \[F_{n+1}=F_n+F_{n-1}+\dots+F_{n-k+1}\] for all \(n\). Let $\frac{1}{\alpha_1}$ be the unique root of \(g_k(x)=1-x-x^2-\dots-x^k\) such that \(|\alpha_1|>1\). Then \(\{F_n\}\) is stable if and only if 
    \begin{equation}\label{eqn:stable_cond_gen_k-bon}
        (\alpha_1 ^{k - 1})F_0 + (\alpha_1^{k - 1} + \alpha_1^{k - 2}) F_1 + \cdots + (\alpha_1^{k - 1} + \cdots + 1)F_{k - 1} = 0.
    \end{equation}
\end{theorem}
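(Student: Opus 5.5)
Write $C_j$ for the bracketed coefficient attached to $\alpha_j$ in the Binet formula (\ref{eqn:binet_gen-k}):
\[C_j=(\alpha_j^{k-1})F_0+(\alpha_j^{k-1}+\alpha_j^{k-2})F_1+\cdots+(\alpha_j^{k-1}+\cdots+1)F_{k-1},\]
so that $F_n=\sum_{j=1}^k A_j\alpha_j^{n-k}C_j$. The condition (\ref{eqn:stable_cond_gen_k-bon}) says exactly that $C_1=0$. Since $\alpha_1$ is real, the plan is to argue separately according to whether $C_1$ vanishes, as in the proof of Theorem \ref{thm:fib_mono}.

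\emph{Case $C_1\neq 0$.} Here $A_1>0$, $\alpha_1>1$ and $C_1$ are real, while $|\alpha_j|<1$ for $j\ge 2$. Hence
\[F_n=A_1C_1\alpha_1^{n-k}+o(1).\]
If $C_1>0$, then eventually $F_n>0$, so $k$ consecutive terms are positive. By the remark in the introduction, this means $\{F_n\}$ is eventually strictly increasing. Concretely, $F_{n+1}-F_n=F_{n-1}+\dots+F_{n-k+1}>0$ once those terms are positive. The case $C_1<0$ is symmetric and gives an eventually strictly decreasing sequence. So $\{F_n\}$ is not stable.

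\emph{Case $C_1=0$.} Now $F_n=\sum_{j\ge2}A_j\alpha_j^{n-k}C_j\to 0$. We must show $\{F_n\}$ is not eventually strictly monotone. Suppose it were, say eventually strictly increasing. A strictly increasing sequence converging to $0$ has all its tail terms negative. But then $F_{n+1}-F_n=F_{n-1}+\dots+F_{n-k+1}<0$, contradicting increase. The decreasing case is symmetric: the tail terms are positive, so the differences are positive.

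The one subtlety is the degenerate sequence with all $F_n=0$, which satisfies $C_1=0$. It is not strictly monotone, so it counts as stable and is consistent with the statement. The real work is justifying the asymptotic claim, which needs only that $A_1\ne0$ and the vanishing of the other terms; both are taken from the facts quoted earlier about $q_n$. The equivalence of the two formulas (\ref{eqn:Fn_qn}) and (\ref{eqn:binet_gen-k}) is assumed as stated in the paper.
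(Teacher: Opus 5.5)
Your proof is correct and follows the paper's route. Both isolate the dominant term $A_1\alpha_1^{n-k}C_1$ of the Binet formula (\ref{eqn:binet_gen-k}) and use $A_1>0$, $\alpha_1>1$ and $|\alpha_j|<1$ for $j\ge 2$ to decide the behaviour from the sign of $C_1$, or from $C_1=0$. In the case $C_1=0$, your explicit argument supplies a step that the paper's proof leaves implicit: a sequence tending to $0$ cannot be eventually strictly monotone, because $F_{n+1}-F_n=F_{n-1}+\dots+F_{n-k+1}$ would then have the wrong sign.
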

\begin{proof}
Note that from the formulation of \(g_k(x)\), we find that \(\alpha_j^n\) approaches \(0\) for each \(j > 1\) for large $n$.
 Thus, from (\ref{eqn:binet_gen-k}), we have \[|F_n - A_1 \alpha_1^{n - k}\left( (\alpha_1 ^{k - 1})F_0 + (\alpha_1^{k - 1} + \alpha_1^{k - 2}) F_1 + \cdots + (\alpha_1^{k - 1} + \cdots + 1)F_{k - 1}\right) |\rightarrow 0\] as \(n\) approaches infinity.
 Since \(A_1\) is real and positive, and \(\alpha_1\) has magnitude greater than 1, then \(\{F_n\}\) is stable if and only if \[ (\alpha_1 ^{k - 1})F_0 + (\alpha_1^{k - 1} + \alpha_1^{k - 2}) F_1 + \cdots + (\alpha_1^{k - 1} + \cdots + 1)F_{k - 1} = 0\]
 \end{proof}
 \begin{corollary}
     For any eventually monotone generalized \(k\)-bonacci sequence, $\{F_n\}$, the ratio $F_{n+1}/F_n$ approaches $\alpha_1$ as $n$ approaches infinity.
 \end{corollary}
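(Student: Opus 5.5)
We argue directly from the Binet formula (\ref{eqn:binet_gen-k}), in the same way as the two corollaries following Theorem \ref{thm:fib_mono}. Write
\[
C_j = (\alpha_j^{k-1})F_0 + (\alpha_j^{k-1}+\alpha_j^{k-2})F_1 + \cdots + (\alpha_j^{k-1}+\cdots+1)F_{k-1},
\]
so that $F_n = \sum_{j=1}^k A_j C_j \alpha_j^{n-k}$.

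Since $\{F_n\}$ is eventually monotone, it is not stable. Theorem \ref{thm:k-bon_mono} therefore gives $C_1 \neq 0$. As noted before that theorem, $A_1>0$, and $C_1$ is real because $\alpha_1$ is real: $g_k(x)$ has a unique positive root, and $1/\alpha_1$ is the root of modulus less than $1$. Hence the dominant coefficient $A_1C_1$ is a nonzero real number.

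Next we factor out the dominant term:
\[
F_n = A_1C_1\alpha_1^{n-k}\Big(1 + \varepsilon_n\Big), \qquad \varepsilon_n = \sum_{j=2}^k \frac{A_jC_j}{A_1C_1}\Big(\frac{\alpha_j}{\alpha_1}\Big)^{n-k}.
\]
We have $|\alpha_j|<1<|\alpha_1|$ for $j\ge 2$, so each ratio $|\alpha_j/\alpha_1|<1$. Since $\varepsilon_n$ is a finite sum of such geometric terms with fixed coefficients, $\varepsilon_n\to 0$.

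It follows that $F_n\neq 0$ for all large $n$, so the ratio $F_{n+1}/F_n$ is defined eventually. It equals
\[
\frac{F_{n+1}}{F_n} = \alpha_1\cdot\frac{1+\varepsilon_{n+1}}{1+\varepsilon_n} \;\to\; \alpha_1 .
\]

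The only point needing care is the nonvanishing of $C_1$ together with the well-definedness of the ratio. Both are handled above: $C_1\neq 0$ comes from the ``only if'' direction of Theorem \ref{thm:k-bon_mono}, and $F_n\neq 0$ eventually comes from $\varepsilon_n\to 0$. The remaining steps are routine limit manipulations.
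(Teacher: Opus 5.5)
Your proof is correct and follows the same route as the paper's. You use the Binet formula (\ref{eqn:binet_gen-k}), get $C_1\neq 0$ from Theorem \ref{thm:k-bon_mono}, and let the $j\ge 2$ terms vanish in the ratio, and you are more careful than the paper in factoring out $A_1C_1\alpha_1^{n-k}$ and checking that $F_n\neq 0$ eventually. One small slip: the implication you need (not stable $\Rightarrow C_1\neq 0$) is the contrapositive of the ``if'' direction of Theorem \ref{thm:k-bon_mono}, not the ``only if'' direction.
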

 \begin{proof}
     Since $\{F_n\}$ is eventually monotone, then (\ref{eqn:stable_cond_gen_k-bon}) does not hold. Since $\alpha_j^n$ is the vanishing term of (\ref{eqn:binet_gen-k}) for each $j>1$, we find that the ratio 
     \[\frac{F_{n+1}}{F_n} \rightarrow \frac{A_1\alpha_1^{n-k+1}\left( (\alpha_1 ^{k - 1})F_0  + \cdots + (\alpha_1^{k - 1} + \cdots + 1)F_{k - 1}\right)}{A_1\alpha_1^{n-k}\left( (\alpha_1 ^{k - 1})F_0 + \cdots + (\alpha_1^{k - 1} + \cdots + 1)F_{k - 1}\right)} = \alpha_1\] as $n$ approaches infinity.
 \end{proof}
\section{Sufficient bounds}\label{sect:suff-cond}
Given an eventually strictly monotone $k$-bonacci sequence with initial conditions $F_0, F_1,\dots, F_{k-1}$, we find $N$ such that for all $n>N$, we have $F_n$ are of the same sign.
 \begin{theorem}\label{thm:suff_cond}
     Let $\{F_n\}$ be an eventually strictly monotone increasing (decreasing) $k$-bonacci sequence. 
    Let \[N = k+\log_{\alpha_1}\left(\frac{\sum_{m = 0}^{k - 1}\left|\sum_{j = m}^{k - 1}F_j\right|}{2 A_1 \sum_{m = 0}^{k - 1}\left(\alpha_1^{k - m - 1}\sum_{j = m}^{k - 1}F_j\right)}\right), \] then for all $n>N$, we have $F_n>0$ ($F_n<0$).
 \end{theorem}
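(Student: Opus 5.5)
We prove the increasing case and obtain the decreasing one by applying it to $\{-F_n\}$; both sides of the claimed bound are invariant under $F_j\mapsto -F_j$ once the denominator is read in absolute value. Write $S_m=\sum_{j=m}^{k-1}F_j$ and $C=\sum_{m=0}^{k-1}\alpha_1^{k-m-1}S_m$. This $C$ is exactly the bracket in (\ref{eqn:stable_cond_gen_k-bon}), since the coefficient of $F_j$ there is $\alpha_1^{k-1}+\dots+\alpha_1^{k-1-j}$. By Theorem \ref{thm:k-bon_mono}, $C\neq 0$ because the sequence is not stable. Since the sequence is eventually increasing, $F_n\to+\infty$ by the asymptotic in that proof, and so $C>0$. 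The starting point is (\ref{eqn:Fn_qn}), $F_n=\sum_m q_{n-m-1}S_m$, into which we substitute the Binet formula (\ref{eqn:binet_k-bon}) for the classical $k$-bonacci numbers.

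The key step is to split $q_{r}=A_1\alpha_1^{r}+E_r$ with $E_r=\sum_{j\ge 2}A_j\alpha_j^{r}$. This gives
\[
F_n = A_1\alpha_1^{n-k}\sum_{m=0}^{k-1}\alpha_1^{k-m-1}S_m+\sum_{m=0}^{k-1}E_{n-m-1}S_m = A_1\alpha_1^{n-k}C+\sum_{m=0}^{k-1}E_{n-m-1}S_m .
\]
We then need the uniform error estimate $|E_r|\le \tfrac12$ for all relevant $r$. This follows from the classical fact that $q_r$ is the nearest integer to $A_1\alpha_1^{r}$, i.e.\ $|q_r-A_1\alpha_1^r|<\tfrac12$, which is known for $k$-bonacci numbers (Dresden–Du, \cite{dresden2014simplified}). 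Granting it, we get
\[
F_n \ge A_1\alpha_1^{n-k}C-\tfrac12\sum_{m=0}^{k-1}|S_m|,
\]
which is positive as soon as $\alpha_1^{n-k}>\sum_m|S_m|/(2A_1C)$. Taking $\log_{\alpha_1}$, which is valid because $\alpha_1>1$ is real, gives exactly $n>N$. The case where the argument of the logarithm is $\le 0$ or below $1$ is harmless, since the inequality then holds for all $n>N$ anyway.

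The main obstacle is the error bound $|E_r|<1/2$ with the right indexing. The indices $n-m-1$ can be small or even index the initial zeros $q_1=\dots=q_{k-2}=0$, or be nonpositive, and the Dresden–Du bound is stated for $r\ge$ some small threshold. I would first cite their result for $r\ge 1$ (or re-derive it from $|\alpha_j|<1$ and the explicit $A_j=\frac{\alpha_j-1}{2+(k+1)(\alpha_j-1)}$ form, checking $\sum_{j\ge2}|A_j|<1/2$ fails in general, so the citation route is preferred). For indices where it is unavailable, I would either restrict to $n$ large enough that $n-k\ge 1$, which is implied when $N\ge k$, or verify directly that the Binet expression extended backward still satisfies the bound. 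I would check the edge cases carefully, since the theorem is stated only for $n>N$ and $N$ may be less than $k$.
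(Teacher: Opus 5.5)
Your proposal is correct and is essentially the paper's own argument. You substitute the Binet formula into (\ref{eqn:Fn_qn}), split off $A_1\alpha_1^{n-k}C$, bound each error $|q_r-A_1\alpha_1^r|$ by $\tfrac12$ via Dresden--Du, and solve $A_1\alpha_1^{n-k}C>\tfrac12\sum_m|S_m|$ for $n$; your use of Theorem~\ref{thm:k-bon_mono} to get $C>0$ and your absolute value in the decreasing case are refinements the paper omits. The index issue you leave open never actually arises: since $C\le\alpha_1^{k-1}\sum_m|S_m|$ and $2A_1\alpha_1^{k-1}<\alpha_1$ (equivalent to $k+1<2\alpha_1^{k-1}$ via $2-\alpha_1=\alpha_1^{-k}$), one always has $N>k-1$, so every integer $n>N$ satisfies $n\ge k$ and all indices $n-m-1\ge 0$ lie in the range where the Dresden--Du bound holds.
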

 \begin{proof}
 Note that from \cite{dresden2014simplified}, for any \(k\) and any \(n\), the distance between \(q_n\) and \(A_1 \alpha_1^n\) is less than \(1 / 2\).
 
 From (\ref{eqn:binet_k-bon}) and (\ref{eqn:Fn_qn}), we have the following:
     \begin{align*}
        F_n &= \sum_{m = 0}^{k - 1}\left(q_{n - m - 1} \sum_{j = m}^{k - 1}F_j\right)\\
        &= \sum_{m = 0}^{k - 1}\left(\left(\sum_{i = 1}^k A_i \alpha_i^{n - m - 1}\right)\left(\sum_{j = m}^{k - 1}F_j\right)\right)\\
        &= \sum_{m = 0}^{k - 1}\left(A_1 \alpha_1^{n - m - 1}\right)\left(\sum_{j = m}^{k - 1}F_j\right)
        + \sum_{m = 0}^{k - 1}\left(\left(\sum_{i = 2}^k A_i \alpha_i^{n - m - 1}\right)\left(\sum_{j = m}^{k - 1}F_j\right)\right)\\
        &= \sum_{m = 0}^{k - 1}\left(A_1 \alpha_1^{n - m - 1}\right)\left(\sum_{j = m}^{k - 1}F_j\right)
        + \sum_{m = 0}^{k - 1}\left(\left(q_{n - m - 1} - A_1 \alpha_1^{n - m - 1}\right)\left(\sum_{j = m}^{k - 1}F_j\right)\right)\\
        &= A_1 \alpha_1^{n - k}\sum_{m = 0}^{k - 1}\left(\alpha_1^{k - m - 1}\sum_{j = m}^{k - 1}F_j\right)
        + \sum_{m = 0}^{k - 1}\left(\left(q_{n - m - 1} - A_1 \alpha_1^{n - m - 1}\right)\left(\sum_{j = m}^{k - 1}F_j\right)\right)\\
        &> A_1 \alpha_1^{n - k}\sum_{m = 0}^{k - 1}\left(\alpha_1^{k - m - 1}\sum_{j = m}^{k - 1}F_j\right)
        - \sum_{m = 0}^{k - 1}\left(\frac{1}{2} \left|\sum_{j = m}^{k - 1}F_j\right|\right)\\
        &= A_1 \alpha_1^{n - k}\sum_{m = 0}^{k - 1}\left(\alpha_1^{k - m - 1}\sum_{j = m}^{k - 1}F_j\right)
        - \frac{1}{2} \sum_{m = 0}^{k - 1}\left|\sum_{j = m}^{k - 1}F_j\right|\\
    \end{align*}

    Assuming the sequence diverges to infinity, we know that \(\sum_{m = 0}^{k - 1}\left(\alpha_1^{k - m - 1}\sum_{j = m}^{k - 1}F_j\right)\) must be positive. Then a \textbf{sufficient condition for \(F_n\) to be positive} is:

    \[n \ge k + \log_{\alpha_1}\left(\frac{\sum_{m = 0}^{k - 1}\left|\sum_{j = m}^{k - 1}F_j\right|}{2 A_1 \sum_{m = 0}^{k - 1}\left(\alpha_1^{k - m - 1}\sum_{j = m}^{k - 1}F_j\right)}\right)\]
 \end{proof}
 
    In general, it can be difficult to find exact values for each \(\alpha_m\) and each \(A_m\). However, the above expression is easy to estimate using approximations since it depends only on \(\alpha_1\) and \(A_1\) along with values that are given to us. To estimate \(\alpha_1\), note that \(q_n / (A_1 \alpha_1^n)\) approaches \(1\) as \(n\) approaches infinity, so \(q_{n+1} / q_n\) approaches \(\alpha_1\). Then observe that \(q_n / \alpha_1^n\) approaches \(A_1\) as \(n\) approaches infinity. 

As an example, let \(k=4\), \(F_0=-3, F_1=-3.1, F_2=2, F_3=2.2\), then we have \(\alpha_1= 1.927561975\dots\), \(A_1=0.079078\dots\), and the following:\\
\begin{center}
\begin{tabular}{|c|c|c|}
\hline
    \(n\) & \(F_n\) & \(F_{n+1}/F_n\) \\
\hline
     0 & -3 & -- \\
     1 & -3.1 & 1.03333\dots \\
     2 & 2 & -0.64516\dots\\
     3 & 2.2 & 1.1\\
     4 & -1.9 & -0.86364\dots\\
     5 & -0.8 & 0.421053\dots\\
     6 & 1.5 & -1.875\dots\\
     7 & 1 & 0.6666\dots\\
     8 & -0.2 & -0.2\\
     9 & 1.5 & -7.5\\
     10 & 3.8 & 2.53333\dots\\
     11 & 6.1 & 1.605263\dots\\
     12 & 11.2 & 1.836066\dots\\
     13 & 22.6 & 2.017857\dots\\
     14 & 43.7 & 1.933628\dots\\
     15 & 83.6 & 1.913043\dots\\
     16 & 161.1 & 1.927033\dots\\
     17 & 311 & 1.930478\dots\\
     18 & 599.4 & 1.927331\dots\\
     19& 1155.1 &   1.927094\dots\\
     \vdots & \vdots & \vdots\\
\hline
\end{tabular}
\end{center}
According to Theorem \ref{thm:suff_cond}, we have that \(N=9.873909\), thus for all \(n>N\), \(F_n>0\), which we can see from the table above.

\
    
\bibliographystyle{plain}
\bibliography{bibfile}

\newpage
\end{document}